\documentclass{amsart}

\usepackage{latexsym,amssymb,amsmath}

\theoremstyle{plain}
\newtheorem{lemma}{Lemma}[section]
\newtheorem{theorem}[lemma]{Theorem}
\newtheorem{conjecture}[lemma]{Conjecture}

\newtheorem{proposition}[lemma]{Proposition}

\theoremstyle{definition}
\newtheorem{example}[lemma]{Example}
\newtheorem{remark}[lemma]{Remark}
\newtheorem{definition}[lemma]{Definition}

\title[]{On two classes of regular sequences} 
\author[ ]{Ri-Xiang Chen} 
\address{Department of Mathematics, Nanjing University of Science and Technology, Nanjing, Jiangsu, 210094, P.R.China}
\email{rc429@cornell.edu}
\thanks{This work was partially supported by National Natural Science Foundation of China (Grant No. 11326064)}

\date{January 26,2015}

\keywords{regular sequence, power sum, matrix, distance.}
\subjclass[2000]{ Primary 05E05, 13A02; Secondary 15A03, 54A10.}

\begin{document}
\begin{abstract}
In this paper we get two new classes of regular sequences 
in the polynomial ring over the field of complex numbers. 
\end{abstract}
\maketitle

\section{Introduction}
\noindent Throughout this paper $S=k[x_1,x_2,\ldots,x_n]$ denotes the polynomial ring 
in $n$ variables over a field $k$. Let $f_1,f_2,\ldots,f_n$ be some homogeneous 
polynomials in $S$ of degrees $a_1,a_2,\ldots,a_n$ with $1\leq a_1 \leq a_2 \leq \cdots \leq a_n$.
It is well-known that the following statements are equivalent:
\begin{itemize}
\item[(1)] $f_1,f_2,\ldots,f_n$ is a regular sequence in $S$;
\item[(2)] The Koszul complex $K.(f_1,f_2,\ldots,f_n)$ is the minimal free resolution of $S/(f_1,f_2,\ldots,f_n)$ over $S$;
\item[(3)] $(f_1,f_2,\ldots,f_n)_N=S_N$ where $N=1+\sum_{i=1}^{n}(a_i-1)$;
\item[(4)] $\sqrt{(f_1,f_2,\ldots,f_n)}=(x_1,x_2,\ldots,x_n)$, that is, $\mbox{ht}(f_1,f_2,\ldots,f_n)=n$;
\end{itemize}
and when the field $k$ is algebraically closed, they are all equivalent to the following:
\begin{itemize}
\item[(5)] $V(f_1,f_2,\ldots,f_n)=\{(0,0,\ldots,0)\}$, that is, $(0,0,\ldots,0)$ is 
the only solution to the system of equations $f_1=0, f_2=0, \ldots, f_n=0$.
\end{itemize}    
There are some other criterions for $f_1,f_2,\ldots,f_n$ being a regular sequence. 
Despite the many criterions, there remain some 
very difficult questions about regular sequences.  

For example, in \cite{B:CKW}, Conca et al. made 
the following conjecture:

\begin{conjecture} \cite{B:CKW} \label{L:Cc}
In $S=\mathbb{C}[x_1,x_2,x_3]$, let $p_a(3)=x_1^a+x_2^a+x_3^a$,
$p_b(3)=x_1^b+x_2^b+x_3^b$, $p_c(3)=x_1^c+x_2^c+x_3^c$, 
where $a,b,c\in \mathbb{N}^*$, $a<b<c$,
$\mbox{gcd}(a,b,c)=1$ and $6|abc$, then $p_a(3),p_b(3),p_c(3)$ 
form a regular sequence in $S$.
\end{conjecture}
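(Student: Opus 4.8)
The plan is to use the equivalence with criterion (5): since $\mathbb{C}$ is algebraically closed, $p_a(3),p_b(3),p_c(3)$ form a regular sequence if and only if their only common zero in $\mathbb{C}^3$ is the origin. So I would assume $(x_1,x_2,x_3)\neq(0,0,0)$ satisfies $x_1^a+x_2^a+x_3^a=x_1^b+x_2^b+x_3^b=x_1^c+x_2^c+x_3^c=0$ and try to derive a contradiction from $a<b<c$, $\gcd(a,b,c)=1$, and $6\mid abc$. Before anything else I would record why the divisibility by $6$ is forced, since this pins down where each prime must be used: the point $(1,-1,0)$ is a common zero whenever $a,b,c$ are all odd, so one needs $2\mid abc$; and $(1,\omega,\omega^2)$ with $\omega=e^{2\pi i/3}$ is a common zero whenever none of $a,b,c$ is divisible by $3$, so one needs $3\mid abc$.

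First I would dispose of the degenerate configurations. If two coordinates vanish the third must too, so at most one coordinate is zero; if, say, $x_3=0$, then setting $\zeta=x_2/x_1$ (both nonzero) the three equations read $\zeta^a=\zeta^b=\zeta^c=-1$, whence $\zeta$ is a root of unity whose order $d$ divides $\gcd(2a,2b,2c)=2\gcd(a,b,c)=2$; thus $\zeta=-1$ and $a,b,c$ are all odd, contradicting $2\mid abc$. Next I would treat coincidences among nonzero coordinates: if exactly two agree, say $x_1=x_2$, then with $w=x_3/x_1$ one gets $w^a=w^b=w^c=-2$, so $w^{b-a}=1$ forces $|w|=1$ while $|w^a|=2$, a contradiction; the all-equal case is immediate. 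Notice that only $\gcd(a,b,c)=1$ and $2\mid abc$ are consumed so far, so the whole weight of $3\mid abc$ must fall on the remaining case.

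This leaves the essential case of three distinct nonzero values, and here I would pass to the matrix viewpoint. Writing $M=(x_j^{k_i})$ for the $3\times3$ matrix with $k_i$ ranging over $\{a,b,c\}$, the three vanishing conditions say precisely that the all-ones vector lies in $\ker M$, so in particular $\det M=0$. Since the $x_j$ are distinct, the generalized Vandermonde factors as $\det M=\pm s_\lambda(x_1,x_2,x_3)\prod_{i<j}(x_i-x_j)$ with $\lambda=(c-2,b-1,a)$ and $\prod_{i<j}(x_i-x_j)\neq0$, reducing $\det M=0$ to the vanishing of a single Schur polynomial. I would then combine this with the stronger requirement that the specific vector $\mathbf{1}$, not merely some vector, be annihilated, aiming to force the ratios $x_j/x_1$ to be roots of unity of order controlled by the differences $b-a$ and $c-b$, and finally to contradict $\gcd(a,b,c)=1$ together with $3\mid abc$.

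The hard part will be exactly this last step. Eliminating $v$ from $1+u^a+v^a$ and $1+u^b+v^b$ by a resultant, and intersecting with the third equation, produces univariate conditions on $u$ whose degrees grow with $a,b,c$, and controlling their common roots uniformly across all admissible triples appears to be out of reach by a single calculation. I therefore expect that a complete proof of Conjecture \ref{L:Cc} in this generality is not accessible by these elementary means, and that the realistic target is to carry the argument through for structured sub-families of exponents---those giving the two classes named in the title---where the Schur-polynomial condition and the root-of-unity bookkeeping can be made fully explicit.
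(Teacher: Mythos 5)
You have not proved the statement, but neither does the paper: this is Conjecture \ref{L:Cc}, quoted from Conca--Krattenthaler--Watanabe, and the paper explicitly leaves it open, contributing only new special cases via Theorem \ref{T:pst}. So there is no proof in the paper to match yours against, and your own text concedes at the end that the essential case is out of reach. To be concrete about where your attempt stops: your degenerate-case reductions (a vanishing coordinate, or two equal coordinates) are correct, but as you yourself observe they consume only $\gcd(a,b,c)=1$ and $2\mid abc$; the entire content of the conjecture sits in the case of three distinct nonzero coordinates, and there your proposed mechanism is not an argument. The condition $\det M=0$ is strictly weaker than $\mathbf{1}\in\ker M$, and the factorization $\det M=\pm s_\lambda(x_1,x_2,x_3)\prod_{i<j}(x_i-x_j)$ only converts the problem into the (equally intractable) vanishing locus of a Schur polynomial; nothing in this setup ``forces the ratios $x_j/x_1$ to be roots of unity'' for arbitrary exponents $a<b<c$, and no known structure theorem makes $3\mid abc$ bite at that point. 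The resultant elimination you sketch is exactly the computation whose degree growth the paper alludes to when it notes that even the family $A=\{1,6,m\}$ took nearly eight pages in \cite{B:CKW}.

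Your closing instinct---retreat to structured subfamilies where root-of-unity bookkeeping is explicit---is precisely what the paper does, and it is worth seeing why the paper's subfamily works where your general matrix does not. For $A=\{a,a+d,a+2d\}$ (more generally $a+(n-1)d$) with $\gcd(a,d)=1$ and $n!\mid a(a+d)\cdots(a+(n-1)d)$, the proof of Theorem \ref{T:pst} normalizes $z_1=1$, groups coordinates by the distinct values among $z_1^d,\ldots,z_n^d$, and writes the system as a genuine Vandermonde matrix in the quantities $z_i^d$ acting on the weight vector $(w_1,\ldots,w_t)$ with $w_i=\sum_{j\in A_i}z_j^a$. The arithmetic-progression structure is exactly what makes the matrix Vandermonde rather than generalized Vandermonde, so no Schur-polynomial vanishing ever needs to be controlled; and the root-of-unity input is isolated in Lemma \ref{L:root} (a corollary of Lam--Leung: $n$ roots of unity of order coprime to $n!$ cannot sum to zero, proved via the degree $\varphi(d)$ of the minimal polynomial of a primitive root), which guarantees $w_1\neq 0$, whence the Vandermonde determinant must vanish, contradicting distinctness. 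Note also that in this subfamily the divisibility hypothesis is repackaged by Lemma \ref{L:nt} as $\gcd(d,n!)=1$, which is where your ``where each prime must be used'' bookkeeping gets absorbed. In short: your preliminary reductions are sound, your assessment that the general conjecture resists these elementary means is accurate, and the realistic target you name is the one the paper actually hits---but the statement as posed remains unproved by both you and the paper.
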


Conca et al. verified some special cases of this 
conjecture and the conjecture remains mysterious.
Another conjecture about regular sequences 
is the famous EGH Conjecture, which is wide open except some special cases.

\begin{conjecture}(Eisenbud-Green-Harris) \cite{B:EGH} \label{C:EGH}
If $I\subset S=k[x_1,x_2,\ldots,x_n]$ is 
a homogeneous ideal containing a regular sequence of 
homogeneous polynomials $f_1, f_2, \ldots, f_n$ 
of degrees $2\leq a_1\leq a_2\leq \cdots \leq a_n$ 
, then there exists a homogeneous ideal in $S$ containing 
$x_1^{a_1}, x_2^{a_2}, \ldots, x_n^{a_n}$ with the same Hilbert function. 
\end{conjecture}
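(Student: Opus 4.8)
The plan is to reduce the statement in stages to a purely numerical assertion about Hilbert functions and then attack that assertion with generic initial ideals and hyperplane restriction. Since the hypothesis already supplies $n$ forms in $n$ variables, $f_1,\dots,f_n$ is a maximal regular sequence, so $A=S/(f_1,\dots,f_n)$ is a graded Artinian complete intersection with Hilbert series $\prod_{i=1}^{n}(1+t+\cdots+t^{a_i-1})$, exactly that of the monomial complete intersection $B=S/(x_1^{a_1},\dots,x_n^{a_n})$. From the surjection $A\twoheadrightarrow S/I$ we get $\dim_k(S/I)_d\le\dim_k A_d$ for all $d$, so $H_{S/I}$ is bounded above, coefficientwise, by the Hilbert function of $B$. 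By the Clements--Lindstr\"{o}m theorem (whose hypothesis is precisely $a_1\le\cdots\le a_n$), a quotient of $B$ realizes a given Hilbert function if and only if that function obeys the Clements--Lindstr\"{o}m growth bounds attached to the exponent vector $(a_1,\dots,a_n)$, and lex ideals are extremal; so if $H_{S/I}$ satisfies these bounds, the associated lex-plus-powers ideal $L+(x_1^{a_1},\dots,x_n^{a_n})$ is a homogeneous ideal containing the pure powers with the same Hilbert function as $I$. Thus the entire problem collapses to proving these ``twisted Macaulay inequalities'' for every ideal $I$ containing a regular sequence of degrees $a_1,\dots,a_n$.

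First I would try to establish the growth bounds by degeneration. After a generic change of coordinates, replace $I$ by its reverse-lexicographic generic initial ideal $J=\operatorname{gin}(I)$, a strongly stable monomial ideal with $H_{S/J}=H_{S/I}$; since $S/I$ is Artinian, $S/J$ is Artinian, so $J$ contains pure powers $x_i^{c_i}$ and in particular a monomial regular sequence. I emphasize that coefficientwise domination $H_{S/J}\le H_B$ is \emph{not} by itself enough, since a Hilbert function lying below a Clements--Lindstr\"{o}m-valid function can still violate the growth bounds. The genuine content is to show that strong stability of $J$ \emph{together with} the regular-sequence hypothesis forces the exponents $c_i$ to align with $(a_1,\dots,a_n)$ and the Clements--Lindstr\"{o}m inequalities to hold automatically; the model case $I=(x_1^{a_1},\dots,x_n^{a_n})$, where $J$ is the monomial ideal and the bounds hold by definition, must be propagated to an arbitrary complete intersection.

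Independently, I would develop an inductive route through Green's hyperplane restriction theorem. Green's theorem controls $\dim_k\bigl(S/(I+\ell S)\bigr)_d$ for a generic linear form $\ell$ and recovers Macaulay's bound on its own; what I need is a refinement that remembers the regular sequence, that is, a restriction estimate in which the presence of the power of degree $a_1$ tightens the ordinary Macaulay growth to the Clements--Lindstr\"{o}m growth. Cutting by one generic hyperplane lowers the number of variables by one and, formally, should replace the exponent vector $(a_1,\dots,a_n)$ by $(a_2,\dots,a_n)$, setting up an induction on $n$ with base cases $n=1$ (Macaulay) and the widely spaced regime where each $a_{i+1}\ge 1+\sum_{j\le i}(a_j-1)$ treated by Caviglia--Maclagan. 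Linkage between $I$ and the residual ideal $(f_1,\dots,f_n):I$ could then be used to transport growth information from one end of the socle to the other during the induction.

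The step I expect to be the real obstacle--and the reason the conjecture is open--is exactly this transfer. A \emph{generic} complete intersection does not degenerate flatly to $(x_1^{a_1},\dots,x_n^{a_n})$: its reverse-lexicographic generic initial ideal is strictly larger than the pure-power ideal in high degrees, so Clements--Lindstr\"{o}m cannot simply be pulled back along the degeneration, and strong stability alone does not record the extra collapsing that the regular sequence imposes. Equivalently, no ``regular-sequence-aware'' version of Green's theorem is known in general: a single pure power behaves predictably under one generic hyperplane section, but the combined effect of all $n$ powers under iterated sections is uncontrolled. I therefore anticipate progress only where this interaction trivializes--widely spaced degrees, very few variables, or monomial and product-of-linear-forms structure, as in the known partial results--with the uniform statement apparently requiring a genuinely new tool beyond gin-degeneration and hyperplane restriction.
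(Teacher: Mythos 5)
This statement is Conjecture \ref{C:EGH}, the Eisenbud--Green--Harris conjecture; the paper does not prove it and explicitly describes it as ``wide open except some special cases,'' so there is no proof in the paper to compare against, and any complete proof submitted here would be a major new result rather than a reconstruction. Your proposal is not such a proof, and to your credit it says so: it is a correct and well-informed reduction plus a frank identification of where the argument dies. The reduction itself is sound and standard: since $f_1,\dots,f_n$ is a maximal regular sequence, $S/(f_1,\dots,f_n)$ and $S/(x_1^{a_1},\dots,x_n^{a_n})$ share the Hilbert series $\prod_i(1+t+\cdots+t^{a_i-1})$, and by Clements--Lindstr\"om the conjecture is equivalent to showing that the Hilbert function of any $I$ containing the regular sequence obeys the growth bounds attached to the exponent vector $(a_1,\dots,a_n)$. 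You are also right that coefficientwise domination $H_{S/I}\le H_B$ does not imply those bounds, a mistake a careless writeup would make and yours avoids.

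The genuine gap is exactly the one you name in your final paragraph: neither gin-degeneration nor Green's hyperplane restriction theorem transfers the Clements--Lindstr\"om growth from the monomial complete intersection to an arbitrary one. The reverse-lexicographic generic initial ideal of a generic complete intersection is strictly larger than $(x_1^{a_1},\dots,x_n^{a_n})$ in high degrees, so strong stability of $\operatorname{gin}(I)$ plus the Artinian pure powers $x_i^{c_i}$ it contains gives no control aligning $(c_1,\dots,c_n)$ with $(a_1,\dots,a_n)$, and no ``regular-sequence-aware'' refinement of Green's theorem is known that tightens Macaulay growth to Clements--Lindstr\"om growth under iterated generic hyperplane sections. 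Your proposal is therefore a program sketch whose hard kernel is the conjecture itself; as a referee I would say it correctly maps the landscape (including the known special cases: $n=2$, monomial regular sequences, products of linear forms as in Abedelfatah's work cited here, and the widely spaced degrees of Caviglia--Maclagan) but supplies no new idea to close the transfer step, so it cannot be accepted as a proof of the statement.
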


These conjectures motivate us to further our study of 
regular sequences. 
It is well-known that generically, $f_1,f_2,\ldots,f_n$ 
form a regular sequence in $S$(see Proposition \ref{P:generic}), but we do not have many 
specific examples of regular sequences. 
The most popular example is the sequence $x_1^{a_1},x_2^{a_2},\ldots,x_n^{a_n}$.
Also, if under a monomial order $<$, $\mbox{in}_{<}(f_1)=x_1^{a_1}$,
$\mbox{in}_{<}(f_2)=x_2^{a_2}$,$\ldots$,$\mbox{in}_{<}(f_n)=x_n^{a_n}$,
then $f_1,f_2,\ldots,f_n$ form a regular sequence in $S$.
In addition, in the case of $a_1=a_2=\cdots=a_n=1$, $f_1,f_2,\ldots,f_n$ 
is a regular sequence if and only if $f_1,f_2,\ldots,f_n$ 
are $k$-linearly independent, and equivalently, the 
determinant of the $n\times n$ coefficient matrix is nonzero.

It is easy to see that if $f_1=g_1h_1$ with $\mbox{deg}(g_1)\geq 1$ 
and $\mbox{deg}(h_1)\geq 1$, then $f_1,f_2,\ldots,f_n$ is a regular sequence 
if and only if both $g_1,f_2,\ldots,f_n$ and $h_1,f_2,\ldots,f_n$ are regular sequences.
So, in \cite{B:Ab}, Abedelfatah considered the following class 
of regular sequences. Let $f_1,f_2,\ldots,f_n$ be homogeneous 
polynomials which split into linear factors, then $f_1,f_2,\ldots,f_n$
is a regular sequence if and only if for $i=1,2,\ldots,n$ and for every 
linear factor $l_i$ of $f_i$, $l_1,l_2,\ldots,l_n$ is a regular sequence.
Abedelfatah proved that the EGH Conjecture holds for this class of 
regular sequences.

The goal of this paper is to get two new classes of regular sequences
in the polynomial ring over the field of complex numbers.

In Section 2 we consider the power sum symmetric polynomial 
$p_m(n)=x_1^m+x_2^m+\cdots+x_n^m$ in $S=\mathbb{C}[x_1,x_2,\ldots,x_n]$. 
We prove that if $a,d\in \mathbb{N}^*$ with $\mbox{gcd}(a,d)=1$ and 
$n!|a(a+d)\cdots (a+(n-1)d)$, then 
$p_a(n),p_{a+d}(n),\ldots, p_{a+(n-1)d}(n)$ is a regular sequence in $S$. (See Theorem \ref{T:pst}.)

In Section 3 we prove that if $f_1,f_2,\ldots,f_n$ are homogeneous 
polynomials of degrees $a_1,a_2,\ldots,a_n$ in $S=\mathbb{C}[x_1,x_2,\ldots,x_n]$
and for all $i=1,2,\ldots,n$, the distance between $f_i$ and $x_i^{a_i}$ (see Definition \ref{D:distance}) 
is less than $1$, then  $f_1,f_2,\ldots,f_n$ is a regular sequence in $S$. (See Theorem \ref{T:close}.)

\textbf{Acknowledgments}. The author would like to thank the referee for 
his/her valuable comments.

\section{Regular sequences of power sums}

\noindent Throughout this section $S=\mathbb{C}[x_1,x_2,\ldots,x_n]$ and let
$p_m(n)$ denote the power sum symmetric polynomial $x_1^m+x_2^m+\cdots+x_n^m$
in $S$. Given positive integers $a_1<a_2<\cdots<a_n$, let $A$ be 
the set $\{a_1,a_2,\ldots,a_n\}$. For simplicity, we will 
denote the sequence $p_{a_1}(n),p_{a_2}(n),\ldots,p_{a_n}(n)$ in $S$
by $p_{A}(n)$.

The question is: when is $p_{A}(n)$  a regular sequence in $S$? 
To answer it, Conca et al. made the following two observations.

\begin{lemma}\cite{B:CKW}\label{L:Cl1}
Let $a_1<a_2<\cdots<a_n$ be positive integers and $A=\{a_1,a_2,\ldots,a_n\}$.
Set $d=\mbox{gcd}(a_1,a_2,\ldots,a_n)$ and $A'=\{a/d|a\in A\}$.
Then $p_A(n)$ is a regular sequence in $S$ if and only if 
$p_{A'}(n)$ is a regular sequence in $S$.
\end{lemma}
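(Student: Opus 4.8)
The plan is to invoke criterion (5) from the introduction. Since the field is $\mathbb{C}$, which is algebraically closed, $p_A(n)$ is a regular sequence in $S$ if and only if $V(p_{a_1}(n),\ldots,p_{a_n}(n))=\{0\}$, and the same holds for $p_{A'}(n)$. So it suffices to compare the two affine varieties, and the whole argument reduces to a change of variables.

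The key algebraic observation is the substitution identity
\[
p_{a_i}(n)(x_1,\ldots,x_n)=\sum_{j=1}^{n}x_j^{a_i}=\sum_{j=1}^{n}\bigl(x_j^{d}\bigr)^{a_i/d}=p_{a_i/d}(n)(x_1^{d},\ldots,x_n^{d}),
\]
which is valid precisely because $d\mid a_i$ for every $i$. I would then introduce the morphism $\phi:\mathbb{C}^n\to\mathbb{C}^n$ given by $\phi(x_1,\ldots,x_n)=(x_1^{d},\ldots,x_n^{d})$. The identity says that a point $x$ lies in $V(p_A(n))$ if and only if $\phi(x)$ lies in $V(p_{A'}(n))$; that is, $V(p_A(n))=\phi^{-1}\bigl(V(p_{A'}(n))\bigr)$.

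First I would record two elementary properties of $\phi$: it is surjective, since every complex number has a $d$-th root, and $\phi^{-1}(0)=\{0\}$, since $y^{d}=0$ forces $y=0$, while $\phi(0)=0$. The equivalence then follows on both sides. If $V(p_{A'}(n))=\{0\}$, then $V(p_A(n))=\phi^{-1}(\{0\})=\{0\}$, so $p_A(n)$ is regular. Conversely, if $V(p_{A'}(n))$ contains a nonzero point $y$, choose any $x$ with $\phi(x)=y$ by surjectivity; then $x\neq 0$ because $\phi(0)=0\neq y$, and $x\in V(p_A(n))$, so $V(p_A(n))\neq\{0\}$ and $p_A(n)$ is not regular. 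Taking contrapositives yields both implications.

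I do not expect a serious obstacle; the content lies entirely in the substitution identity and the elementary fiber structure of $\phi$. The one point to handle with care is the backward direction, where I must ensure that a nonzero common zero of $p_{A'}(n)$ pulls back to a genuinely nonzero common zero of $p_A(n)$ rather than collapsing to the origin, which is exactly what $\phi^{-1}(0)=\{0\}$ guarantees. Alternatively, one can phrase the argument ring-theoretically: the inclusion $\mathbb{C}[x_1^{d},\ldots,x_n^{d}]\hookrightarrow S$ is a finite free extension under which $p_{A'}(n)$ corresponds to $p_A(n)$, so $\dim S/(p_A(n))=\dim \mathbb{C}[x_1,\ldots,x_n]/(p_{A'}(n))$ and one compares heights via criterion (4); but the variety version above is the most transparent.
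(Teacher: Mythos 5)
Your proof is correct. Note that the paper itself offers no proof of this lemma --- it is quoted from \cite{B:CKW} --- so there is nothing internal to compare against; your argument via criterion (5), using the identity $p_{a_i}(n)(x_1,\ldots,x_n)=p_{a_i/d}(n)(x_1^d,\ldots,x_n^d)$ together with the surjectivity of $\phi(x)=(x_1^d,\ldots,x_n^d)$ and the fact that $\phi^{-1}(0)=\{0\}$, is precisely the standard argument (and the one given in \cite{B:CKW}), with the one subtle point --- that a nonzero zero of $p_{A'}(n)$ pulls back to a nonzero zero of $p_A(n)$ --- handled correctly.
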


\begin{lemma}\cite{B:CKW}\label{L:Cl2}
Let $f_1,f_2,\ldots,f_n$ be a regular sequence of homogeneous 
symmetric polynomials of degrees $1\leq a_1 \leq a_2 \leq \cdots \leq a_n$
in $S=\mathbb{C}[x_1,x_2,\ldots,x_n]$, then $n!$ divides $a_1a_2\cdots a_n$,
that is, $n!|a_1a_2\cdots a_n$.
\end{lemma}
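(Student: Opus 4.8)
The plan is to exploit the fact that symmetric polynomials all lie in the invariant ring $R=\mathbb{C}[x_1,\ldots,x_n]^{S_n}$, over which the full polynomial ring $S$ is free of rank $n!=|S_n|$, and then to compare the $\mathbb{C}$-dimensions of the two Artinian quotients. First I would recall two classical facts about $R$: it is itself a polynomial ring $\mathbb{C}[e_1,\ldots,e_n]$ in the elementary symmetric functions, graded so that $\deg e_i=i$; and $S$ is a free $R$-module of rank exactly $n!$ (a basis is furnished by the descent/staircase monomials, whose number is the order of the symmetric group). This freeness is precisely where the factor $n!$ will enter.

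Since each $f_j$ is symmetric we have $f_j\in R$, so the ideal they generate in $S$ is extended from the ideal $\mathfrak{a}=(f_1,\ldots,f_n)R$ of $R$. Using the $R$-module isomorphism $S\cong R^{\,n!}$, I would then compute
$$ S/(f_1,\ldots,f_n)S \;=\; S\otimes_R R/\mathfrak{a} \;\cong\; (R/\mathfrak{a})^{\oplus n!}, $$
which forces $R/\mathfrak{a}$ to be finite-dimensional and yields $\dim_{\mathbb{C}} S/(f_1,\ldots,f_n)=n!\cdot\dim_{\mathbb{C}} R/\mathfrak{a}$.

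The remaining ingredient comes from the hypothesis that $f_1,\ldots,f_n$ is a regular sequence in $S$. By the equivalence with criterion~(2), the Koszul complex resolves $S/(f_1,\ldots,f_n)$, so its Hilbert series is $\prod_{i=1}^n (1-t^{a_i})/(1-t)^n=\prod_{i=1}^n(1+t+\cdots+t^{a_i-1})$; since the quotient is Artinian, evaluating at $t=1$ gives $\dim_{\mathbb{C}} S/(f_1,\ldots,f_n)=a_1a_2\cdots a_n$. Combining the two displayed counts gives $a_1\cdots a_n=n!\cdot\dim_{\mathbb{C}} R/\mathfrak{a}$, and because $\dim_{\mathbb{C}} R/\mathfrak{a}$ is a nonnegative integer we conclude $n!\mid a_1a_2\cdots a_n$.

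The main obstacle is the structural input, namely establishing (or correctly citing) that $S$ is free of rank exactly $n!$ over the ring of symmetric functions, together with the bookkeeping that the extended ideal $(f)S$ is the same as $\mathfrak{a}\,S$ so that the tensor-product identification is legitimate. An alternative route reaches the same place: by faithful flatness of $S$ over $R$ the sequence $f_1,\ldots,f_n$ stays regular in $R$, and one computes $\dim_{\mathbb{C}} R/\mathfrak{a}$ directly from the Hilbert series $\prod_i(1-t^i)^{-1}$ of $R$ as $\prod_i a_i/n!$; this is conceptually the same and still rests on the freeness of $S$ over $R$.
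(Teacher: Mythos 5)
Your proof is correct, and it coincides in substance with the source's own argument: the paper states this lemma only as a citation to \cite{B:CKW} without reproducing a proof, and the proof there rests on exactly your two ingredients --- $S$ is a free graded module of rank $n!$ over the invariant ring $R=\mathbb{C}[e_1,\ldots,e_n]$ (so the symmetric $f_i$ generate an extended ideal and lengths multiply by $n!$), and the Koszul/Hilbert-series count $\dim_{\mathbb{C}} S/(f_1,\ldots,f_n)=a_1\cdots a_n$, giving $a_1\cdots a_n=n!\cdot\dim_{\mathbb{C}}R/\mathfrak{a}\in n!\,\mathbb{Z}$. The only point worth noting is cosmetic: the graded isomorphism is $S\cong\bigoplus_{i=1}^{n!}R(-d_i)$ with shifts rather than $R^{n!}$, but this does not affect the $\mathbb{C}$-dimension count you use.
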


By Lemma \ref{L:Cl1} we can always assume that $\mbox{gcd}(a_1,a_2,\ldots,a_n)=1$.
And Lemma \ref{L:Cl2} implies that $n!|a_1a_2\cdots a_n$ is a necessary 
condition for $p_A(n)$ to be a regular sequence in $S$. One may wonder if 
both $\mbox{gcd}(a_1,a_2,\ldots,a_n)=1$ and $n!|a_1a_2\cdots a_n$ would imply
$p_A(n)$ being a regular sequence. This is true for $n=2$, which is proved 
in \cite{B:CKW}, and is also a special case of the following Theorem \ref{T:pst}. 
The case of $n=3$ remains mysterious as Conjecture \ref{L:Cc}.
However, for $n\geq 4$, the above two conditions are not sufficient for 
$p_A(n)$ being a regular sequence, as is illustrated in the following example.

\begin{example}
$\mbox{gcd}(1,3,5,24)=1$ and $4!|1\times 3\times 5\times 24$, but 
$p_1(4),p_3(4),p_5(4),p_{24}(4)$ is not a regular sequence in $S=\mathbb{C}[x_1,x_2,x_3,x_4]$,
because $(e^{\frac{\pi i}{24}},-e^{\frac{\pi i}{24}},1,-1)$ is a nonzero solution
to the system of equations $p_1(4)=0,p_3(4)=0,p_5(4)=0,p_{24}(4)=0$. 
Examples can be similarly constructed for $n>4$.
\end{example}

In \cite{B:CKW} Proposition 2.9, Conca et al. proved that if $a_1,a_2,\ldots,a_n$
are consecutive positive integers then $p_A(n)$ is a regular sequence in $S$. 
To generalize this result we have the following theorem.

\begin{theorem}\label{T:pst}
Let $S=\mathbb{C}[x_1,x_2,\ldots,x_n]$, $n\geq 2$, 
$a,d\in \mathbb{N}^*$ and $A=\{a,a+d,a+2d,\ldots,a+(n-1)d\}$.
Assume that $\mbox{gcd}(a,d)=1$ and $n!|a(a+d)(a+2d)\cdots (a+(n-1)d)$,
then $p_A(n)$ is a regular sequence in $S$.
\end{theorem}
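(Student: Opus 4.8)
The plan is to invoke criterion (5): since $\mathbb{C}$ is algebraically closed, $p_A(n)$ is a regular sequence exactly when the system $p_{a+id}(n)=0$, $i=0,1,\ldots,n-1$, has only the trivial zero in $\mathbb{C}^n$. So I would assume a common zero $z=(z_1,\ldots,z_n)$ and try to force $z=0$. Discarding the zero coordinates, let $\alpha_1,\ldots,\alpha_s$ be the distinct nonzero values among the $z_j$ with multiplicities $\mu_1,\ldots,\mu_s\geq 1$, so that $\sum_p \mu_p\leq n$ and the equations read $\sum_{p=1}^{s}\mu_p\alpha_p^{\,a+id}=0$ for $i=0,\ldots,n-1$.

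First I would separate the scales $\alpha_p^{a}$ from the ratios $\alpha_p^{d}$. Grouping the indices $p$ by the distinct values of $v:=\alpha_p^{d}$, the generating function $g(t)=\sum_{i\geq 0}p_{a+id}(z)\,t^i=\sum_{\ell}C_\ell/(1-v_\ell t)$, where $v_1,\ldots,v_r$ ($r\leq n$) are the distinct nonzero $d$-th powers and $C_\ell=\sum_{p:\,\alpha_p^{d}=v_\ell}\mu_p\alpha_p^{a}$. The hypotheses say $g(t)=O(t^n)$; clearing the denominator $\prod_\ell(1-v_\ell t)$ of degree $r\leq n$ leaves a polynomial of degree $<n$ vanishing to order $\geq n$, hence identically zero, so by uniqueness of partial fractions $C_\ell=0$ for every group. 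This Vandermonde/partial-fraction step is where the ``matrix'' enters and is routine.

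Next comes the arithmetic heart. I would observe that the divisibility hypothesis together with $\gcd(a,d)=1$ forces every prime factor of $d$ to exceed $n$: if $p\mid d$ then $p\nmid a$, so $a+id\equiv a\not\equiv 0\pmod p$ for all $i$, whence $v_p\big(\prod_{i=0}^{n-1}(a+id)\big)=0$; since $n!\mid\prod_{i=0}^{n-1}(a+id)$ this forces $v_p(n!)=0$, i.e.\ $p>n$. Then, inside a fixed group $\ell$, writing the distinct $d$-th roots of $v_\ell$ as $\rho_\ell\zeta^{k}$ with $\zeta=e^{2\pi i/d}$, the relation $C_\ell=0$ becomes $\sum_p\mu_p\zeta^{a k_p}=0$; because $\gcd(a,d)=1$ the exponents $a k_p$ are again distinct residues and the $\zeta^{a k_p}$ are genuine $d$-th roots of unity. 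This is a nonempty vanishing sum of $d$-th roots of unity with positive integer weights, so by the theorem of Lam--Leung its total length $\sum_p\mu_p$ is a nonnegative integer combination of the primes dividing $d$, in particular at least the least prime factor of $d$, which we just showed is $>n$. This contradicts $\sum_p\mu_p\leq n$. Hence no nonzero common zero exists and $p_A(n)$ is a regular sequence.

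The main obstacle is precisely this last step: controlling the cancellation inside a single $d$-th-power group. The reduction to $C_\ell=0$ is formal, and the passage from ``$n!\mid\prod$'' to ``all prime factors of $d$ exceed $n$'' is a short valuation computation; the real work is ruling out a short vanishing sum of $d$-th roots of unity. When $d$ is a prime power $p^k$ this is elementary, since $\Phi_d(1)=p$ divides the length $\sum_p\mu_p$ because $\Phi_d$ divides $\sum_p\mu_p x^{k_p}$ in $\mathbb{Z}[x]$; but the general case, where $\Phi_d(1)=1$, genuinely needs the Lam--Leung lower bound (or an inductive descent on the prime factorization of $d$). I would therefore isolate the statement ``a nonempty vanishing sum of $d$-th roots of unity has at least $p_{\min}(d)$ terms'' as the one external ingredient and build the rest of the argument around it.
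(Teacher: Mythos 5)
Your proposal is correct and follows essentially the same route as the paper: you group the nonzero coordinates by their distinct $d$-th powers and annihilate the group sums by a Vandermonde argument (your partial-fraction step is exactly the paper's vanishing Vandermonde determinant in generating-function form), you extract from $\gcd(a,d)=1$ and $n!\mid a(a+d)\cdots(a+(n-1)d)$ that every prime factor of $d$ exceeds $n$ (the needed direction of Lemma \ref{L:nt}), and you rule out a vanishing sum of at most $n$ $d$-th roots of unity. The only difference is one of packaging: you invoke the Lam--Leung theorem as a black box, whereas the paper proves the required special case (Lemma \ref{L:root}) by an elementary self-contained induction, while itself noting that this lemma is a corollary of Lam--Leung.
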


Note that the assumption $\mbox{gcd}(a,d)=1$ is equivalent 
to $\mbox{gcd}(a,a+d,a+2d,\ldots,a+(n-1)d)=1$. And when $d=1$,
this theorem is Proposition 2.9 in \cite{B:CKW}.
To prove Theorem \ref{T:pst}, we need to prove the following 
two lemmas, which themselves are interesting.

\begin{lemma}\label{L:nt}
Let $n,a,d\in \mathbb{N}^*$, $n\geq 2$ and $\mbox{gcd}(a,d)=1$.
Then $n!|a(a+d)(a+2d)\cdots(a+(n-1)d)$ if and only if 
$\mbox{gcd}(d,n!)=1$, that is, either $d=1$ or every 
prime factor of $d$ is greater than $n$.
\end{lemma}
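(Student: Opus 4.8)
The plan is to prove the number-theoretic equivalence $n! \mid a(a+d)\cdots(a+(n-1)d) \iff \gcd(d,n!)=1$ by analyzing the prime factorization of both sides, one prime at a time. So I would fix an arbitrary prime $p \le n$ and compare the exponent of $p$ dividing $n!$ with the exponent of $p$ dividing the product $P = a(a+d)\cdots(a+(n-1)d)$. Since $n! = \prod_{p \le n} p^{v_p(n!)}$, the divisibility $n! \mid P$ holds if and only if $v_p(P) \ge v_p(n!)$ for every such prime $p$, and Legendre's formula gives $v_p(n!) = \sum_{j \ge 1} \lfloor n/p^j \rfloor$.

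\emph{The easy direction} ($\gcd(d,n!)=1 \implies n! \mid P$). First I would treat the case $d=1$ separately, where $P = a(a+1)\cdots(a+n-1)$ is a product of $n$ consecutive integers, hence divisible by $n!$ because $\binom{a+n-1}{n} = P/n!$ is an integer; this recovers the consecutive-integer case underlying Proposition 2.9 of \cite{B:CKW}. For the general case, the key observation is that when $\gcd(d, n!) = 1$, every prime $p \le n$ satisfies $\gcd(d,p)=1$, so $d$ is invertible modulo every power $p^j$. Multiplication by $d$ is then a bijection on $\mathbb{Z}/p^j\mathbb{Z}$, which means the residues $a, a+d, \ldots, a+(n-1)d$ modulo $p^j$ are a translate-and-scaling of $0,1,\ldots,n-1$; consequently, among any $p^j$ consecutive terms of the arithmetic progression exactly one is divisible by $p^j$. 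Counting these multiples across the $n$ terms reproduces exactly the Legendre sum $\sum_j \lfloor n/p^j \rfloor$ as a lower bound for $v_p(P)$, giving $v_p(P) \ge v_p(n!)$ for each prime $p \le n$, and hence $n! \mid P$.

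\emph{The contrapositive of the hard direction} ($\gcd(d,n!) \ne 1 \implies n! \nmid P$). Here I would pick a prime $p \le n$ dividing $d$. The constraint $\gcd(a,d)=1$ forces $p \nmid a$, so $p \nmid a + id$ for every $i$ (since $a+id \equiv a \pmod p$), whence $p \nmid P$ and therefore $v_p(P) = 0$. But $p \le n$ guarantees $v_p(n!) \ge \lfloor n/p \rfloor \ge 1 > 0 = v_p(P)$, so $n! \nmid P$. This direction is actually the cleaner of the two once the coprimality hypothesis is invoked. Finally, I would note that the condition $\gcd(d,n!)=1$ is literally the statement that $d$ shares no prime factor $\le n$ with $n!$, i.e. either $d=1$ or every prime factor of $d$ exceeds $n$, matching the phrasing in the lemma.

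The main obstacle I anticipate is making the counting argument in the easy direction fully rigorous: I need to justify carefully that $\gcd(d,p)=1$ makes the number of indices $i \in \{0,1,\ldots,n-1\}$ with $p^j \mid a+id$ equal to exactly $\lfloor n/p^j \rfloor$ or $\lceil n/p^j \rceil$, and that summing these over $j$ yields at least the Legendre count rather than something smaller. The subtlety is that for a fixed $p^j$ the multiples of $p^j$ among the $n$ terms are spaced exactly $p^j$ apart (because $d$ is a unit mod $p^j$), so their count is $\lfloor n/p^j \rfloor$ or one more, and summing the guaranteed $\lfloor n/p^j \rfloor$ over all $j \ge 1$ gives precisely $v_p(n!)$; care is needed to confirm that a term contributing to the $p^{j+1}$-count also contributes to the $p^j$-count so that the exponents add correctly rather than being double-subtracted. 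Once this bookkeeping is pinned down the proof closes immediately.
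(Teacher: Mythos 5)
Your proof is correct, but your argument for the ``if'' direction takes a genuinely different route from the paper's. On the hard direction you and the paper do essentially the same thing: a common prime $p$ of $d$ and $n!$ satisfies $p\le n$ and $a+id\equiv a\not\equiv 0 \pmod p$ for every $i$, so $p\nmid a(a+d)\cdots(a+(n-1)d)$ while $p\mid n!$ (the paper phrases this as a contradiction via $\gcd(d,a+id)=\gcd(d,a)=1$; you argue the contrapositive directly, which is if anything cleaner). For the easy direction, however, the paper sidesteps all $p$-adic counting with a single B\'ezout trick: writing $sd+t\,n!=1$, it computes modulo $n!$ that $s^n a(a+d)\cdots(a+(n-1)d)\equiv sa(sa+1)\cdots(sa+n-1)\equiv 0$, since a product of $n$ consecutive integers is divisible by $n!$, and then cancels $s^n$ because $\gcd(s,n!)=1$. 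Your route instead fixes each prime $p\le n$ and proves $v_p\bigl(a(a+d)\cdots(a+(n-1)d)\bigr)\ge v_p(n!)$ via Legendre's formula, using that $d$ is a unit modulo $p^j$, so the indices $i$ with $p^j\mid a+id$ form a single residue class modulo $p^j$ and hence number at least $\lfloor n/p^j\rfloor$ among $i=0,\dots,n-1$; the bookkeeping you flag at the end is settled by the standard identity $\sum_i v_p(a+id)=\sum_{j\ge 1}\#\{i : p^j\mid a+id\}$, which makes the nesting of counts automatic, so your argument closes rigorously. Comparing the two: the paper's proof is shorter and reduces everything to the integrality of binomial coefficients (which also subsumes your separate $d=1$ case, since the Bézout substitution literally transforms the progression into consecutive integers modulo $n!$), while your valuation argument is more elementary in flavor, gives the sharper per-prime inequality $v_p\ge v_p(n!)$ rather than bare divisibility, and adapts more readily to variants where one compares the progression against moduli other than $n!$.
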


\begin{proof}
`only if': Suppose, for contradiction, that there is a prime number $p$ which 
divides $\mbox{gcd}(d,n!)$, then $p|d$ and $p|n!$.
Since $n!|a(a+d)(a+2d)\cdots(a+(n-1)d)$, it follows that 
$p|a+id$ for some $0\leq i\leq n-1$, which implies
that $p$ is a prime factor of $\mbox{gcd}(d,a+id)=\mbox{gcd}(d,a)$.
But $\mbox{gcd}(a,d)=1$, so we get a contradiction and therefore,
$\mbox{gcd}(d,n!)=1$.

`if':Assume $\mbox{gcd}(d,n!)=1$, then there exist integers
$s,t$ such that $sd+t(n!)=1$. Hence,
\begin{align*}
 &\ s^na(a+d)(a+2d)\cdots (a+(n-1)d) \\
 &\equiv  sa(sa+sd)(sa+2sd)\cdots (sa+(n-1)sd) \\
 &\equiv  sa(sa+1-t(n!))(sa+2-2t(n!))\cdots (sa+(n-1)-(n-1)t(n!))  \\
 &\equiv  sa(sa+1)(sa+2)\cdots (sa+(n-1)) \\
 &\equiv  0\quad \quad (\mbox{mod}\ n!) . 
\end{align*}
Since $\mbox{gcd}(s,n!)=1$, it follows that $n!|a(a+d)(a+2d)\cdots(a+(n-1)d)$.
\end{proof} 

In \cite{B:LL} Lam and Leung proved that if 
$z_1,z_2,\ldots,z_n\in \mathbb{C}$ are some $d$-th roots of unity
and $z_1+z_2+\cdots+z_n=0$, then $n$ must be 
a linear combination with non-negative integer coefficients of the 
prime facotrs of $d$. This is the main theorem of \cite{B:LL} and it 
was proved by group ring techniques. 
The next lemma can be viewed as a corollary to 
Lam and Leung's Theorem. For the completeness of 
the paper, we also give an elementary proof of the lemma. 

\begin{lemma}\label{L:root}
Let $d,n\in \mathbb{N}^*$, $d\geq 2$, $n\geq 2$ and 
$z_1,z_2,\ldots,z_n\in \mathbb{C}$ be some $d$-th roots of unity.
If $\mbox{gcd}(d,n!)=1$, then
$z_1+z_2+\cdots+z_n\neq 0$.
\end{lemma}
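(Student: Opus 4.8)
The hypothesis $\gcd(d,n!)=1$ says exactly that every prime factor of $d$ exceeds $n$, and since $d\ge 2$ there is at least one such prime. I will therefore prove the equivalent assertion: if every prime factor of $d$ is greater than $n$, then no sum of $n\ge 1$ $d$-th roots of unity, counted with multiplicity, can vanish. At the level of the Lam--Leung theorem \cite{B:LL} this is a one-line corollary: a vanishing sum would force $n$ to be a nonnegative integer combination of the prime factors of $d$, and any nonempty such combination is at least the smallest prime factor, which is $>n$. The plan, however, is to give a self-contained induction on $d$ that reproves this special case from scratch.

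Writing $z_i=\zeta^{e_i}$ with $\zeta=e^{2\pi i/d}$ and collecting equal terms, a vanishing sum is the same datum as nonnegative integers $c_0,\dots,c_{d-1}$ with $\sum_j c_j=n$ and $\sum_j c_j\zeta^{j}=0$, equivalently $\Phi_d(x)\mid \sum_j c_j x^j$ in $\mathbb{Z}[x]$. Evaluating the quotient at $x=1$ gives $n=\Phi_d(1)\,Q(1)$ with $Q(1)\in\mathbb{Z}$; since $\Phi_d(1)=p$ when $d$ is a power of a prime $p$ and $\Phi_d(1)=1$ otherwise, this instantly settles the prime-power case, because $p\mid n$ would force $n\ge p>n$. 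It gives nothing when $d$ has two or more prime factors, and bridging that gap is the main obstacle, which the induction is designed to handle.

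For the inductive step I fix a prime factor $p$ of $d$, write $d=p^a e$ with $\gcd(p,e)=1$ and $a\ge 1$, and use the coprime factorization $\mu_d=\mu_{p^a}\times\mu_e$ to write each root uniquely as $z_i=u_iv_i$ with $u_i\in\mu_{p^a}$ and $v_i\in\mu_e$. Grouping by $u_i=\zeta_{p^a}^{k}$ turns the relation into $\sum_{k=0}^{p^a-1}\zeta_{p^a}^{k}\,T_k=0$, where $T_k\in\mathbb{Z}[\zeta_e]$ is the sum of the $n_k$ roots $v_i$ attached to exponent $k$ and $\sum_k n_k=n$. The key structural input is the cyclotomic degree identity $[\mathbb{Q}(\zeta_d):\mathbb{Q}(\zeta_e)]=\varphi(p^a)$: the space of $\mathbb{Q}(\zeta_e)$-linear relations among $1,\zeta_{p^a},\dots,\zeta_{p^a}^{p^a-1}$ then has dimension $p^a-\varphi(p^a)=p^{a-1}$ and is spanned by the disjointly supported relations $\sum_{j=0}^{p-1}\zeta_{p^a}^{\,r+jp^{a-1}}=0$, so $T_k$ depends only on $k\bmod p^{a-1}$.

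Now positivity enters decisively. Each residue class modulo $p^{a-1}$ contains $p$ exponents $k$, whereas $\sum_k n_k=n<p$; hence at least one exponent $k$ in each class has $n_k=0$, so the common value $T_k$ on that class is an empty sum and equals $0$. Therefore every $T_k=0$. Since $n\ge 1$, some $n_{k_0}\ge 1$, and $T_{k_0}=0$ is a nonempty vanishing sum of $n_{k_0}$ $e$-th roots of unity. If $e=1$ this reads $n_{k_0}=0$, a contradiction; if $e\ge 2$ then $e<d$ and every prime factor of $e$ also divides $d$, hence exceeds $n\ge n_{k_0}$, so the induction hypothesis applies to $e$ and forbids such a sum. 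Either way we reach a contradiction, completing the induction. The two points requiring care are the degree identity $[\mathbb{Q}(\zeta_d):\mathbb{Q}(\zeta_e)]=\varphi(p^a)$, i.e.\ linear disjointness of cyclotomic fields of coprime conductor, and the verification that the reduced modulus $e$ still satisfies the hypothesis, which it does automatically since its prime factors form a subset of those of $d$.
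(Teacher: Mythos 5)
Your proof is correct, but it takes a genuinely different route from the paper's. Both arguments reduce the hypothesis $\gcd(d,n!)=1$ to ``every prime factor of $d$ exceeds $n$,'' both induct over the prime factorization, and both invoke the same field-theoretic input, namely $[\mathbb{Q}(\zeta_d):\mathbb{Q}(\zeta_e)]=\varphi(p^a)$ for $d=p^ae$ with $\gcd(p,e)=1$ (the paper states it as $[\mathbb{Q}(\varepsilon)(\omega):\mathbb{Q}(\varepsilon)]=\varphi(d_1)$); the difference lies in how that input is exploited. The paper sorts the exponents $0=b_1\le\cdots\le b_n$ of the $p$-power parts, uses $d_1>n(d_1-\varphi(d_1))$ to locate a gap larger than $d_1-\varphi(d_1)$, rotates the sum so the gap sits on top, shows via the induction hypothesis that the leading coefficient $\varepsilon^{c_j}+\cdots+\varepsilon^{c_n}$ is nonzero, and then contradicts the degree $\varphi(d_1)$ of the minimal polynomial of $\omega$ over $\mathbb{Q}(\varepsilon)$. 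You instead determine the entire space of $\mathbb{Q}(\zeta_e)$-linear relations among $1,\zeta_{p^a},\dots,\zeta_{p^a}^{p^a-1}$: the dimension count $p^a-\varphi(p^a)=p^{a-1}$ together with the $p^{a-1}$ disjointly supported relations $\sum_{j=0}^{p-1}\zeta_{p^a}^{\,r+jp^{a-1}}=0$ forces the coefficients $T_k$ to be constant on residue classes modulo $p^{a-1}$, and the pigeonhole $n<p$ (each class has $p$ members but at most $n$ of all the $n_k$ are nonzero) then annihilates every $T_k$, so the recursion descends to $e<d$. Your spanning claim checks out (disjoint supports give independence, and the count matches the kernel dimension), the case $e=1$ correctly absorbs the prime-power base --- making your separate $\Phi_d(1)$ observation dispensable --- and the hypothesis passes to $e$ since its prime factors are among those of $d$ and $n_{k_0}\le n$. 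As for what each approach buys: your relation-space computation is essentially the local mechanism of the Lam--Leung theorem and proves strictly more, namely that any vanishing weighted sum of $d$-th roots with total weight less than the smallest prime factor of $d$ vanishes coset-by-coset along $\mu_p$-cosets; the paper's maximal-gap trick avoids identifying the kernel altogether, needing only the degree of a minimal polynomial, at the price of the more delicate rotate-and-bound-the-degree maneuver in both the base case and the inductive step.
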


\begin{proof}
Let $d=q_1^{e_1}q_2^{e_2}\cdots q_s^{e_s}$ 
be the prime factorization of $d$, where $q_1, q_2, \ldots, q_s$ are 
distinct prime numbers and $e_1,e_2,\ldots, e_s\geq 1$ 
. Since $\mbox{gcd}(d,n!)=1$, 
it follows that $q_1> n,q_2> n,\ldots,q_s> n$.
We will prove this lemma by induction on $s$.

First we consider the case $s=1$. Suppose, for contradiction, 
that $z_1+z_2+\cdots+z_n= 0$. Let $\omega=e^{\frac{2\pi i}{d}}$.
Without the loss of generality, we can assume that 
$z_1=\omega^{b_1},z_2=\omega^{b_2},\ldots,z_n=\omega^{b_n}$ 
with $0=b_1\leq b_2\leq \cdots \leq b_n \leq d-1$. Then
\[
(b_2-b_1)+(b_3-b_2)+\cdots+(b_n-b_{n-1})+(b_1+d-b_n)=d.
\]
Since $q_1>n$, we have $q_1^{e_1}>nq_1^{e_1-1}$, so that
$d>n(d-\varphi(d))$, where $\varphi(d)=q_1^{e_1}-q_1^{e_1-1}$ 
is Euler's totient function. Hence, one of 
$b_2-b_1$,$b_3-b_2$, \ldots, $b_n-b_{n-1}$ and $b_1+d-b_n=d-b_n$ 
is greater than $d-\varphi(d)$.

Suppose $d-b_n>d-\varphi(d)$, then $b_n<\varphi(d)$.
Since $z_1+z_2+\cdots+z_n=\omega^{b_1}+\omega^{b_2}+\cdots+\omega^{b_n}=0$,
it follows that $\omega$ is a root of the polynomial $x^{b_1}+x^{b_2}+\cdots+x^{b_n}$,
which is of degree $b_n<\varphi(d)$. But this contradicts to the well-known fact that
the minimal polynomial of $\omega$ over $\mathbb{Q}$ has degree $\varphi(d)$.

Suppose $b_i-b_{i-1}>d-\varphi(d)$ for some $2\leq i\leq n$.
Since $z_1+z_2+\cdots+z_n=0$, it follows that
\begin{align*}
\frac{1}{z_i}(z_1+z_2+\cdots+z_n)&= \frac{1}{z_i}(z_i+z_{i+1}+\cdots+z_n+z_1+\cdots+z_{i-1})\\
                                &= 1+\omega^{b_{i+1}-b{i}}+\cdots+\omega^{b_n-b_i}+\omega^{b_1+d-b_i}+\cdots+\omega^{b_{i-1}+d-b_i}\\
                                &=0.
\end{align*}
Hence, $\omega$ is a root of the polynomial $1+x^{b_{i+1}-b{i}}+\cdots+x^{b_n-b_i}+x^{d-b_i}+\cdots+x^{b_{i-1}+d-b_i}$,
which is of degree $d-(b_i-b_{i-1})<d-(d-\varphi(d))=\varphi(d)$.
But this also contradicts to the fact that the minimal 
polynomial of $\omega$ over $\mathbb{Q}$ has degree $\varphi(d)$. 
So we have proved that $z_1+z_2+\cdots+z_n\neq 0$ for the case $s=1$.

Now we consider the case $s\geq 2$. Suppose, for contradiction, 
that $z_1+z_2+\cdots+z_n= 0$. Let $d_1=q_1^{e_1}$ and $d_2=q_2^{e_2}\cdots q_s^{e_s}$,
then $d=d_1d_2$ and $\mbox{gcd}(d_1,d_2)=1$.
Let $\omega=e^{\frac{2\pi i}{d_1}}$, $\varepsilon=e^{\frac{2\pi i}{d_2}}$ 
and $\eta=e^{\frac{2\pi i}{d}}$, then it is well-known that 
$\mathbb{Q}(\eta)=\mathbb{Q}(\omega)(\varepsilon)=\mathbb{Q}(\varepsilon)(\omega)$.
Hence, without the loss of generality, we can assume that
$z_1=\omega^{b_1}\varepsilon^{c_1}$, $z_2=\omega^{b_2}\varepsilon^{c_2}$, 
$\ldots$, $z_n=\omega^{b_n}\varepsilon^{c_n}$ with
$0=b_1\leq b_2\leq \cdots \leq b_n\leq d_1-1$ and $0=c_1\leq c_2\leq \cdots \leq c_n\leq d_2-1$.
Then
\[
(b_2-b_1)+(b_3-b_2)+\cdots+(b_n-b_{n-1})+(b_1+d_1-b_n)=d_1.
\]
By the proof of the case $s=1$, it is easy to see that we can assume
$d_1-b_n\geq \max\{b_i-b_{i-1}|i=2,\ldots,n\}$.
Since $q_1>n$, we have $q_1^{e_1}>nq_1^{e_1-1}$, so that
$d_1>n(d_1-\varphi(d_1))$, which implies $d_1-b_n>d_1-\varphi(d_1)$, 
that is, $b_n<\varphi(d_1)$. 

Let $j=\min\{i=1,\ldots,n|b_i=b_n\}\geq 1$, then 
$\varepsilon^{c_j},\varepsilon^{c_{j+1}},\ldots, \varepsilon^{c_n}$
are some $d_2$-th roots of unity. Since $\mbox{gcd}(d,n!)=1$, it 
follows that $\mbox{gcd}(d_2,(n-j+1)!)=1$. Thus, by the induction hypothesis,
it is easy to see that  $\varepsilon^{c_j}+\varepsilon^{c_{j+1}}+\cdots+\varepsilon^{c_n}\neq 0$.
Therefore, $\omega$ is a root of the polynomial 
$\varepsilon^{c_1}x^{b_1}+\varepsilon^{c_2}x^{b_2}+\cdots+\varepsilon^{c_{j-1}}x^{b_{j-1}}
+(\varepsilon^{c_j}+\varepsilon^{c_{j+1}}+\cdots+\varepsilon^{c_n})x^{b_n}\in \mathbb{Q}(\varepsilon)[x]$,
which is of degree $b_n<\varphi(d_1)$. However, since $\varphi(d)=\varphi(d_1)\varphi(d_2)$ and
\begin{align*}
\varphi(d)&=[\mathbb{Q}(\eta):\mathbb{Q}]\\
          &=[\mathbb{Q}(\varepsilon)(\omega):\mathbb{Q}(\varepsilon)][\mathbb{Q}(\varepsilon):\mathbb{Q}]\\
          &=[\mathbb{Q}(\varepsilon)(\omega):\mathbb{Q}(\varepsilon)]\varphi(d_2),
\end{align*}
it follows that $[\mathbb{Q}(\varepsilon)(\omega):\mathbb{Q}(\varepsilon)]=\varphi(d_1)$, 
so that the minimal polynomial of $\omega$ over $\mathbb{Q}(\varepsilon)$ has degree $\varphi(d_1)$.
So we have a contradiction, and therefore, $z_1+z_2+\cdots+z_n\neq 0$.
\end{proof}

\begin{proof}[Proof of Theorem \ref{T:pst}]
Suppose, for contradiction, that $p_A(n)$ is not a regular sequence in $S$,
then the polynomial system associated to $p_A(n)$ has a nonzero solution
$(z_1,z_2,\ldots,z_n)\in \mathbb{C}^n$. Since the polynomials $p_A(n)$ are 
homogeneous and symmetric, without the loss of generality,
we can assume that $z_1=1$ and that $z_1^d,z_2^d,\ldots,z_t^d$ are all the 
distinct values among $z_1^d,z_2^d,\ldots,z_n^d$. Note that $1\leq t\leq n$.

If $t=1$ and $d=1$, then $z_1=z_2=\cdots=z_n=1$. Hence, $z_1^a+z_2^a+\cdots+z_n^a=n\neq 0$,
which contradicts to $(z_1,z_2,\ldots,z_n)$ being a root of $p_a(n)$.

If $t=1$ and $d\geq 2$, then $z_1^d=z_2^d=\cdots=z_n^d=1$. Hence, 
$z_1^a,z_2^a,\ldots,z_n^a$ are some $d$-th roots of unity. 
By Lemma \ref{L:nt} and Lemma \ref{L:root}, we have that
$z_1^a+z_2^a+\cdots+z_n^a\neq 0$, which also contradicts 
to $(z_1,z_2,\ldots,z_n)$ being a root of $p_a(n)$.

If $t\geq 2$, then for every $1\leq i\leq t$, we set
\[
A_i=\{j=1,2,\ldots,n|z_j^d=z_i^d\} \ \ \ \ \mbox{and} \ \ \ \  w_i=\sum_{j\in A_i}z_j^a.
\]
Since $(z_1,z_2,\ldots,z_n)$ is a solution of 
the polynomial system associated to $p_A(n)$, 
we have the following system of equations:
\[
\left\{\begin{array}{ll}
w_1+w_2+\cdots+w_t=0\\
w_1z_1^d+w_2z_2^d+\cdots+w_tz_t^d=0\\
\cdots \cdots \cdots \\
w_1z_1^{(n-1)d}+w_2z_2^{(n-1)d}+\cdots+w_tz_t^{(n-1)d}=0\end{array}\right.
\]
Rewriting the first $t$ equations in the matrix form, we get
\[
\begin{pmatrix} 
1 & 1 & \cdots & 1 \\ 
z_1^d & z_2^d & \cdots &z_t^d \\ 
\vdots &\vdots &  & \vdots\\ 
z_1^{(t-1)d}& z_2^{(t-1)d}& \cdots & z_t^{(t-1)d}
\end{pmatrix}
\begin{pmatrix} 
w_1\\ 
w_2 \\ 
\vdots\\ 
w_t
\end{pmatrix}
=0.
\]
By Lemma \ref{L:nt} and Lemma \ref{L:root}, it is easy to see 
that $w_1\neq 0$, so that $(w_1,w_2,\ldots,w_t)$ is a nonzero vetor.
Thus, we have that 
\[
\begin{vmatrix} 
1 & 1 & \cdots & 1 \\ 
z_1^d & z_2^d & \cdots &z_t^d \\ 
\vdots &\vdots &  & \vdots\\ 
z_1^{(t-1)d}& z_2^{(t-1)d}& \cdots & z_t^{(t-1)d}
\end{vmatrix}
=0,
\]
which implies,
\[
\prod_{1\leq i< j\leq t}(z_j^d-z_i^d)=0.
\]
This contradicts to the assumption that 
$z_1^d,z_2^d,\ldots,z_t^d$ are all distinct.

Therefore, $p_A(n)$ is a regular sequence in $S$.
\end{proof}

\begin{remark}
In \cite{B:CKW}, Conca et al. verified some special cases of 
Conjecture \ref{L:Cc}. Now Theorem \ref{T:pst} proves some new 
cases of this conjecture. For example, if $A=\{1,8,15\}$ 
or $A=\{2,7,12\}$, then $p_A(3)$ is a regular sequence 
in $S=\mathbb{C}[x_1,x_2,x_3]$. However,
Conjecture \ref{L:Cc} still remains mysterious. 
For example, if $A=\{2,5,9\}$ then it is hard to see 
if $p_A(3)$ is a regular sequence. Also, when $A=\{1,6,m\}$, the proof for 
$p_A(3)$ being a regular sequence took almost $8$ pages
in \cite{B:CKW}, so one might wonder if there is an easier way to prove it. 

\end{remark}

\section{Regular sequences close to $x_1^{a_1},x_2^{a_2},\ldots,x_n^{a_n}$}
\noindent Let $S=k[x_1,x_2,\ldots,x_n]$ be the polynomial ring in $n$ variables 
over an infinite field $k$. Let $f_1,f_2,\ldots,f_n$ be a sequence of \
homogeneous polynomials of degrees $1\leq a_1\leq a_2\leq \cdots \leq a_n$ in $S$.
Then it is well-known that generically, 
$f_1,f_2,\ldots,f_n$ is a regular sequence in $S$.
In the next proposition, we will give this basic fact
an elementary proof, which will also be useful when proving Theorem \ref{T:close}.

\begin{proposition}\label{P:generic}
Let $S=k[x_1,x_2,\ldots,x_n]$ with $k$ being any field.
Given $1\leq a_1 \leq a_2 \leq \cdots \leq a_n$, 
let $W=S_{a_1}\oplus S_{a_2} \oplus \cdots \oplus S_{a_n}$ 
be the affine $k$-space of dimension $m$, 
where $m={\textstyle \sum_{i=1}^n\binom{n+a_i-1}{a_i} }$. Let
\[
\Sigma=\{(f_1,f_2,\ldots,f_n)\in W|f_1,f_2,\ldots,f_n\ \mbox{is a regular sequence in }\ S\}.
\]
Then $\Sigma$ is a non-empty Zariski-open subset of $W$.
Furthermore, if $k$ is an infinite field, then $\Sigma$
is a dense open subset of $W$, and then, generically, a 
sequence of homogeneous polynomials $f_1,f_2,\ldots, f_n$ 
of degrees $a_1,a_2,\ldots,a_n$ is a regular sequence in $S$.
\end{proposition}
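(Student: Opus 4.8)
The plan is to show that the non-regular locus $W \setminus \Sigma$ is Zariski-closed and proper, and that $\Sigma$ is non-empty by exhibiting an explicit point; density over an infinite field will then follow because a proper Zariski-closed subset of an affine space cannot contain a dense subset. I would begin by translating the regularity condition into something algebraic via criterion (3) from the introduction: $f_1,\ldots,f_n$ is a regular sequence if and only if $(f_1,\ldots,f_n)_N = S_N$, where $N = 1 + \sum_{i=1}^n (a_i - 1)$. The key point is that the coefficients of the generators of $(f_1,\ldots,f_n)_N$ — that is, the products $x^\alpha f_i$ of degree $N$ — are polynomial (indeed bilinear) in the coordinates of the point $(f_1,\ldots,f_n) \in W$ and the monomial structure of $S$.

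Concretely, I would fix bases: the monomials of degree $N$ form a basis of $S_N$, and for each $i$ the monomials $x^\alpha$ of degree $N - a_i$ index the multiples $x^\alpha f_i$. Assembling all these products gives a linear map $\varphi \colon \bigoplus_{i=1}^n S_{N-a_i} \to S_N$ whose matrix $M$ has entries that are linear in the coordinates of $(f_1,\ldots,f_n)$. The condition $(f_1,\ldots,f_n)_N = S_N$ is exactly the condition that $\varphi$ is surjective, i.e. that $M$ has rank equal to $\dim_k S_N = \binom{n+N-1}{N}$. Surjectivity of a matrix of fixed size is equivalent to the non-vanishing of at least one maximal minor of $M$; each such minor is a polynomial in the coordinates of $W$. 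Hence $W \setminus \Sigma$ is the common zero locus of all these maximal minors, which is Zariski-closed, so $\Sigma$ is Zariski-open.

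For non-emptiness I would simply take $f_i = x_i^{a_i}$: the introduction already records that $x_1^{a_1},\ldots,x_n^{a_n}$ is a regular sequence (it is the Koszul-regular monomial sequence, and its zero locus is the origin by criterion (5), or directly $(x_1^{a_1},\ldots,x_n^{a_n})_N = S_N$ since every monomial of degree $N = 1 + \sum(a_i-1)$ is divisible by some $x_i^{a_i}$ by a pigeonhole argument on exponents). Thus $(x_1^{a_1},\ldots,x_n^{a_n}) \in \Sigma$, so $\Sigma \neq \emptyset$ and at least one maximal minor of $M$ is not identically zero as a polynomial on $W$.

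Finally, for the density statement when $k$ is infinite, I would invoke the standard fact that a nonzero polynomial over an infinite field does not vanish on all of affine space; since $\Sigma$ is the non-vanishing locus of a polynomial (a suitable maximal minor) that is not identically zero, $\Sigma$ is a non-empty Zariski-open set, and any non-empty Zariski-open subset of an irreducible space such as $W \cong \mathbb{A}^m_k$ is dense. The phrase ``generically $f_1,\ldots,f_n$ is a regular sequence'' is then just a restatement of density of $\Sigma$. I expect the only delicate point to be the bookkeeping that shows the entries of $M$ are genuinely coordinate functions on $W$ and that surjectivity is the correct reformulation of criterion (3); the pigeonhole verification that $x_1^{a_1},\ldots,x_n^{a_n}$ hits all of $S_N$ is the small computation that anchors non-emptiness, and everything else is formal.
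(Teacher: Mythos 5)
Your proposal is correct and follows essentially the same route as the paper: criterion (3) is translated into the surjectivity of the multiplication map $\bigoplus_i S_{N-a_i}\to S_N$, whose matrix has entries linear in the coordinates of $W$, so that $\Sigma$ is the complement of the vanishing locus of the maximal minors, with non-emptiness witnessed by $x_1^{a_1},\ldots,x_n^{a_n}$ and density from irreducibility of affine space over an infinite field. One small point in your favor: you use the correct degree $N=1+\sum_{i=1}^n(a_i-1)$ from criterion (3) and verify the pigeonhole fact that the monomial sequence spans $S_N$, whereas the paper's proof writes $N=\sum_{i=1}^n(a_i-1)$ (an apparent typo, since in that degree a complete intersection has one-dimensional socle and the criterion fails).
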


\begin{proof}
For any $d\geq 1$, the set of all monomials in $S_d$ ordered 
in the lexicographic order form a basis of the $k$-vector space
$S_d$. And $S_0=k$ has basis $\{1\}$.
These bases give rise to a basis of the $k$-vector space 
$W=S_{a_1}\oplus S_{a_2} \oplus \cdots \oplus S_{a_n}$. 
We call these bases the monomial bases of these $k$-vector spaces.
So, under the monomial basis, $W$ is isomorphic to $k^m$. 

Let $N={\textstyle \sum_{i=1}^n(a_i-1)}$, then the $n$-tuple 
$(f_1,f_2,\ldots,f_n)$ is in $\Sigma$ if and only if for the ideal 
$(f_1,f_2,\ldots,f_n)\subset S$, we have that 
$(f_1,f_2,\ldots,f_n)_N=S_N$, or equivalently, 
given any $h\in S_N$, there exist 
$g_1\in S_{N-a_1}, g_2\in S_{N-a_2}, \ldots, g_n\in S_{N-a_n}$ 
such that 
\[
f_1g_1+f_2g_2+\cdots +f_ng_n=h. \tag{1}
\]

Under the monomial basis, let $\lambda_1,\lambda_2,\ldots,\lambda_m$ be the 
coordinates of the $n$-tuple $(f_1,f_2,\ldots,f_n)$ in $W$,
let $\mu_1,\mu_2,\ldots,\mu_p$ be the coordinates 
of $h$ in $S_N$ where $p=\binom{n+N-1}{N}$,
and let $z_1,z_2,\ldots,z_q$ be the coordinates
of the $n$-tuple $(g_1,g_2,\ldots,g_n)$ in 
$S_{N-a_1}\oplus S_{N-a_2} \oplus \cdots \oplus S_{N-a_n}$ where $q=\sum_{i=1}^n\binom{n+N-a_i-1}{N-a_i}$.
Then equation (1) is equivalent to the following matrix equation:
\[
A
\begin{pmatrix} 
z_1\\ 
z_2 \\ 
\vdots\\ 
z_q
\end{pmatrix}
=
\begin{pmatrix}  
\mu_1\\ 
\mu_2 \\ 
\vdots\\ 
\mu_p
\end{pmatrix}, \tag{2}
\] 
where the $p\times q$ matrix $A$ is determined by equation (1) 
and the entries of $A$ are either some $\lambda_i$ or zero. 
Thus, $f_1,f_2,\ldots,f_n$ is a regular sequence if and only if 
given any vector $(\mu_1,\mu_2,\ldots,\mu_p)^T$, there exists a 
vector $(z_1,z_2,\ldots,z_q)^T$ satisfying equation (2), 
or equivalently, $\mbox{rank}(A)=p$. So, the $n$-tuple 
$(f_1,f_2,\ldots,f_n)$ is in $\Sigma$ if and only if 
the $m$-tuple $(\lambda_1,\lambda_2,\ldots,\lambda_m)$ 
is in $k^m-V(I_p(A))$. 
Since $k^m-V(I_p(A))$ is a Zariski-open subset of $k^m$,
it follows that $\Sigma$ is a Zariski-open subset of $W$.
Note that $x_1^{a_1},x_2^{a_2},\ldots, x_n^{a_n}$ is a regular
sequence in $S$. Therefore, $\Sigma$ is a non-empty Zariski-open subset of $W$.

If the field $k$ is infinite, then the affine space $W$
is irreducible, so that the non-empty Zariski-open subset 
$\Sigma$ is dense in $W$, which implies that generically, a 
sequence of homogeneous polynomials $f_1,f_2,\ldots, f_n$ 
of degrees $a_1,a_2,\ldots,a_n$ is a regular sequence in $S$.
\end{proof}

To illustrate equation (2) in the above proof of Proposition \ref{P:generic},
we have the following example.

\begin{example}
Let $S=k[x_1,x_2]$ and $a_1=a_2=2$, then the numbers 
defined in the above proof are $m=6$, $N=3$, $p=q=4$.
Let
\begin{align*}
f_1&=\lambda_1x_1^2+\lambda_2x_1x_2+\lambda_3x_2^2,\ \quad \ g_1=z_1x_1+z_2x_2,\\
f_2&=\lambda_4x_1^2+\lambda_5x_1x_2+\lambda_6x_2^2,\ \quad \ g_2=z_3x_1+z_4x_2,\\
h  &=\mu_1x_1^3+\mu_2x_1^2x_2+\mu_3x_1x_2^2+\mu_4x_2^3.
\end{align*}
Then $f_1g_1+f_2g_2=h$ is equivalent to the following matrix equation:
\[
A\begin{pmatrix} 
z_1\\ 
z_2 \\ 
z_3\\ 
z_4
\end{pmatrix}
=
\begin{pmatrix} 
\lambda_1&0 &\lambda_4 &0 \\ 
\lambda_2 &\lambda_1 &\lambda_5 &\lambda_4 \\ 
\lambda_3 &\lambda_2 &\lambda_6 &\lambda_5\\ 
0 &\lambda_3 &0 &\lambda_6
\end{pmatrix}
\begin{pmatrix} 
z_1\\ 
z_2 \\ 
z_3\\ 
z_4
\end{pmatrix}
=
\begin{pmatrix}  
\mu_1\\ 
\mu_2 \\ 
\mu_3\\ 
\mu_4
\end{pmatrix}.
\]
Since $z_1$ in $g_1$ can only be multiplied by $f_1$,
it follows that only the coefficients $\lambda_1,\lambda_2,\lambda_3$ 
of $f_1$ can appear in the first column of the matrix $A$ and each coefficient 
of $f_1$ appear only once in the first column. The other columns of $A$ 
have similar properties. Also, one can see that $f_1,f_2$ 
is a regular sequence in $S$ if and only if $\det(A)\neq 0$.

\end{example}

\begin{remark}\label{R:A}
Illustrated by the above example, we have that in general, the matrix $A$ in the 
proof of Proposition \ref{P:generic} has the following property:
for any column of $A$, there exists some $f_i$ such that 
the entries of the column are either the coefficients of $f_i$ or zero, and each 
coefficient of $f_i$ appears only once in that column. 
This observation about the matrix $A$ will be used in 
the proof of Theorem \ref{T:close}, which is the main result of this section.

If $k=\mathbb{C}$, then by Theorem 2.3 in Chapter 3 of \cite{B:CLO},
we see that $f_1,f_2,\ldots,f_n$ is a regular sequence in $S$
if and only if the resultant $\mbox{Res}(f_1,f_2,\ldots,f_n)$ is nonzero.
If $n=2$ or $a_1=a_2=\cdots=a_n=1$, then $\mbox{Res}(f_1,f_2,\ldots,f_n)=\det(A)$. 
For other cases, the matrix $A$ is not even a square matrix. And although
we have that $V(I_p(A))=V(\mbox{Res}(f_1,f_2,\ldots,f_n))$, it is not clear if
$\mbox{Res}(f_1,f_2,\ldots,f_n)$ can always be expressed as a single determinant. 
\end{remark}

In order to state Theorem \ref{T:close}, we need the following definition.
In the rest of this section, we will assume $k=\mathbb{C}$.

\begin{definition}\label{D:distance} 
Let $S=\mathbb{C}[x_1,x_2,\ldots,x_n]$ and $f,g$ be two
homogeneous polynomials in $S$ of degree $d$.
Suppose
\begin{align*}
f&=\lambda_1x_1^d+\lambda_2x_1^{d-1}x_2+\cdots+\lambda_rx_n^d,\\
g&=\nu_1x_1^d+\nu_2x_1^{d-1}x_2+\cdots+\nu_rx_n^d,
\end{align*}
where $r=\binom{n+d-1}{d}$, then we define
\[
\mbox{d}(f,g)=|\lambda_1-\nu_1|+|\lambda_2-\nu_2|+\cdots+|\lambda_r-\nu_r|,
\]
and we call it \emph{the distance between $f$ and $g$}.
\end{definition}

Given $1\leq a_1\leq a_2\leq \cdots \leq a_n$, let 
$W=S_{a_1}\oplus S_{a_2}\oplus \cdots \oplus S_{a_n}$. 
For any two $n$-tuples $(f_1,f_2,\ldots,f_n)$ 
and $(g_1,g_2,\ldots,g_n)$ in $W$, we can define the distance 
between them as follows:
\[
\mbox{d}\left((f_1,f_2,\ldots,f_n),(g_1,g_2,\ldots,g_n)\right)=\mbox{d}(f_1,g_1)+\mbox{d}(f_2,g_2)+\cdots+\mbox{d}(f_n,g_n).
\]
This definition makes $W$ into a metric space, whose induced 
topology is larger than the Zariski topology on $W$. 
So by Proposition \ref{P:generic}, the set $\Sigma$ 
is a non-empty open subset in the metric space $W$. 
Therefore, given any regular sequence $f_1,f_2,\ldots,f_n$
of homogeneous polynomials with degrees $a_1,a_2,\ldots,a_n$ 
in $S=\mathbb{C}[x_1,x_2,\ldots,x_n]$, there exists $\varepsilon>0$
such that for any homogeneous polynomials $g_1,g_2,\ldots,g_n\in S$
of degrees $a_1,a_2,\ldots,a_n$, if $\mbox{d}(g_1,f_1)<\varepsilon$,
$\mbox{d}(g_2,f_2)<\varepsilon$, $\ldots$, $\mbox{d}(g_n,f_n)<\varepsilon$,
then $g_1,g_2,\ldots,g_n$ is also a regular sequence in $S$. 

The question is: if $f_1=x_1^{a_1},f_2=x_2^{a_2},\ldots, f_n=x_n^{a_n}$,
what is the maximal value for $\varepsilon$? The answer is 
$\varepsilon=1$, as is shown in the following theorem.

\begin{theorem}\label{T:close}
Let $f_1,f_2,\ldots,f_n$ be a sequence of homogeneous polynomials
of degrees $1\leq a_1\leq a_2\leq \cdots \leq a_n$ in $S=\mathbb{C}[x_1,x_2,\ldots,x_n]$.
If $\mbox{d}(f_i,x_i^{a_i})<1$ for $i=1,2,\ldots,n$, then 
$f_1,f_2,\ldots, f_n$ is a regular sequence in $S$. 
\end{theorem}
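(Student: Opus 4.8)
The plan is to bypass the matrix machinery of Proposition~\ref{P:generic} and argue directly with common zeros, using criterion~(5) from the introduction: since $k=\mathbb{C}$ is algebraically closed, $f_1,f_2,\ldots,f_n$ is a regular sequence in $S$ if and only if the origin is the only common zero of $f_1,\ldots,f_n$ in $\mathbb{C}^n$. So I would suppose for contradiction that there is a nonzero point $(z_1,z_2,\ldots,z_n)\in\mathbb{C}^n$ with $f_i(z_1,\ldots,z_n)=0$ for every $i$. Since each $f_i$ is homogeneous, the common zero set is stable under scaling, so after multiplying by a suitable nonzero scalar I may assume $\max_{1\le j\le n}|z_j|=1$. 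Fix an index $k$ with $|z_k|=1$; then $|z_j|\le 1$ for all $j$.

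The decisive idea is to look only at the single equation $f_k(z)=0$, choosing $k$ to be a coordinate of maximal modulus precisely so that it is paired with the variable $x_k$ whose power $x_k^{a_k}$ is the monomial that $f_k$ is close to. Write $f_k=c\,x_k^{a_k}+\sum_{\beta}\mu_\beta x^\beta$, where $\beta$ runs over the degree-$a_k$ monomials other than $x_k^{a_k}$. Evaluating at $z$ and isolating the diagonal term gives $c\,z_k^{a_k}=-\sum_{\beta}\mu_\beta z^\beta$. Taking absolute values and using that every $|z_j|\le 1$, so that each monomial value satisfies $|z^\beta|\le 1$ while $|z_k^{a_k}|=1$, I obtain $|c|\le\sum_{\beta}|\mu_\beta|$.

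It then remains to feed in the hypothesis. By Definition~\ref{D:distance}, $\mathrm{d}(f_k,x_k^{a_k})=|c-1|+\sum_{\beta}|\mu_\beta|<1$, hence $\sum_{\beta}|\mu_\beta|<1-|c-1|$. Combined with $|c|\le\sum_{\beta}|\mu_\beta|$ this yields $|c|+|c-1|<1$, which contradicts the triangle inequality $|c|+|c-1|\ge|c-(c-1)|=1$. This contradiction shows that no nonzero common zero exists, and therefore $f_1,f_2,\ldots,f_n$ is a regular sequence.

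I expect essentially all of the content to sit in the opening reduction rather than in any computation. The alternative route suggested by Remark~\ref{R:A}, controlling the whole system through the matrix $A$ and the nonvanishing of one of its maximal minors, would force one to understand how a small perturbation of the coefficients moves determinant-type quantities, which seems considerably harder. By contrast, restricting attention to the equation indexed by a maximal-modulus coordinate collapses everything to the one-line bound $|c|+|c-1|\ge 1$. The only steps needing care are the scaling normalization (legitimate because the $f_i$ are homogeneous) and the estimate $|z^\beta|\le 1$, which works because $x_k^{a_k}$ is the one degree-$a_k$ monomial attaining modulus $1$ at the chosen point while all others are damped. The argument also makes transparent why the constant $1$ is sharp, in agreement with the value $\varepsilon=1$ announced before the theorem.
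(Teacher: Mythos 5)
Your proof is correct, and it takes a genuinely different route from the paper. The paper works through criterion (3): it reuses the matrix $A$ built in the proof of Proposition \ref{P:generic}, extracts a $p\times p$ submatrix $B$ that equals the identity when evaluated at $(x_1^{a_1},\ldots,x_n^{a_n})$, observes via Remark \ref{R:A} that $B(f_1,\ldots,f_n)$ is strictly column diagonally dominant because the hypothesis gives $c_i<1-|\nu_i-1|\leq|\nu_i|$ (with $\nu_i$ the coefficient of $x_i^{a_i}$ in $f_i$ and $c_i$ the sum of the moduli of the remaining coefficients), and then invokes the dominance lemma (Lemma \ref{L:det}) to get $\det B(f_1,\ldots,f_n)\neq 0$, hence $\operatorname{rank}(A)=p$. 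You instead work through criterion (5), which is legitimate since $\mathbb{C}$ is algebraically closed: normalize a putative nonzero common zero so that $\max_j|z_j|=1$, pick $k$ with $|z_k|=1$, and read off from $f_k(z)=0$ the inequality $|c|\leq\sum_\beta|\mu_\beta|$, which together with $|c-1|+\sum_\beta|\mu_\beta|<1$ violates $|c|+|c-1|\geq 1$. Your argument is shorter and more elementary, and it is really the same mechanism in disguise: the paper's proof of Lemma \ref{L:det} proceeds by selecting a coefficient $l_j$ of maximal modulus in a linear dependence, and your choice of the maximal-modulus coordinate $z_k$ inlines that very trick directly at the level of the common zero, bypassing $A$, $B$, and the minors entirely; both proofs ultimately rest on the same numerical inequality $c_k<|\nu_k|$. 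What the paper's heavier route buys is the reusable matrix framework (Proposition \ref{P:generic}, Remark \ref{R:A}), which the author exploits again in Proposition \ref{P:nbhd}, whereas your route makes the sharpness of the constant $1$ (Remark \ref{R:close}) more transparent. One cosmetic slip in your commentary: at the chosen point $x_k^{a_k}$ need not be the \emph{only} degree-$a_k$ monomial of modulus $1$ (take $z=(1,1,\ldots,1)$); but your actual estimate only uses $|z^\beta|\leq 1$ for the off-diagonal monomials, so nothing in the proof is affected.
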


To prove this theorem, we need the following lemma.

\begin{lemma}\label{L:det}
Let $A=(a_{ij})_{n\times n}\in M_n(\mathbb{C})$. 
If for every $j=1,2,\ldots,n$, we have
\[
|a_{jj}|>|a_{1j}|+\cdots+|a_{j-1,j}|+|a_{j+1,j}|+\cdots+|a_{nj}|,
\]
then $\det(A)\neq 0$.
\end{lemma}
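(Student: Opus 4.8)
The plan is to recognize this as the Levy--Desplanques theorem on strictly diagonally dominant matrices, and to reduce the column-dominance hypothesis given here to the more familiar row-dominance situation. The hypothesis states that each diagonal entry dominates the sum of the moduli of the other entries \emph{in its column}. Since $\det(A)=\det(A^T)$, I would first pass to the transpose $B=A^T=(b_{ij})$, where $b_{ij}=a_{ji}$. The column-dominance condition on $A$ translates directly into the statement that for every $j$,
\[
|b_{jj}|=|a_{jj}|>|a_{1j}|+\cdots+|a_{j-1,j}|+|a_{j+1,j}|+\cdots+|a_{nj}|=\sum_{i\neq j}|b_{ji}|,
\]
i.e. $B$ is strictly diagonally dominant \emph{by rows}. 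Since $\det(A)=\det(B)$, it suffices to prove that a strictly row-diagonally-dominant matrix is nonsingular.

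For the row version I would argue by contradiction. Suppose $\det(B)=0$; then $B$ has a nontrivial kernel, so there is a nonzero vector $x=(x_1,\ldots,x_n)^T$ with $Bx=0$. Choose an index $k$ for which $|x_k|$ is maximal, so that $|x_k|\geq|x_i|$ for all $i$ and, since $x\neq 0$, $|x_k|>0$. The $k$-th coordinate of the equation $Bx=0$ reads $\sum_{i=1}^n b_{ki}x_i=0$, which I rewrite as $b_{kk}x_k=-\sum_{i\neq k}b_{ki}x_i$. Taking moduli and applying the triangle inequality gives
\[
|b_{kk}|\,|x_k|=\Big|\sum_{i\neq k}b_{ki}x_i\Big|\leq\sum_{i\neq k}|b_{ki}|\,|x_i|\leq|x_k|\sum_{i\neq k}|b_{ki}|.
\]
Dividing by $|x_k|>0$ yields $|b_{kk}|\leq\sum_{i\neq k}|b_{ki}|$, which contradicts the strict row-dominance of $B$ at index $k$. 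Hence $\det(B)\neq 0$, and therefore $\det(A)\neq 0$.

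There is no serious obstacle in this argument; the only points requiring care are the bookkeeping in the transpose step, which must correctly turn column sums into row sums, and the observation that the maximal coordinate $|x_k|$ is strictly positive so that the final division is legitimate. The strictness of the hypothesis is exactly what is needed to convert the chain of inequalities into a contradiction rather than an equality.
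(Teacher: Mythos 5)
Your proof is correct and is essentially the paper's own argument: the paper takes a linear dependence $l_1\alpha_1+\cdots+l_n\alpha_n=0$ among the rows of $A$ with $|l_j|$ maximal and reads off the $j$-th coordinate, which is exactly your kernel vector of $A^T$ with maximal coordinate, followed by the same triangle-inequality contradiction. Your explicit transpose step is only a cosmetic repackaging of the same idea.
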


\begin{proof}
Suppose, for contradiction, that $\det(A)=0$.
Let $\alpha_i=(a_{i1},a_{i2},\ldots,a_{in})$ for $i=1,2,\ldots,n$.
Then $\alpha_1,\alpha_2,\ldots,\alpha_n$ are linearly dependent.
Hence, there exist $l_1,l_2,\ldots,l_n \in \mathbb{C}$,
which are not all zero, such that 
\[
l_1\alpha_1+l_2\alpha_2+\cdots+l_n\alpha_n=0.
\]
Let $|l_j|=\max\{|l_i||i=1,2,\ldots,n\}>0$,
then
\[
\alpha_j=-\frac{l_1}{l_j}\alpha_1-\cdots-\frac{l_{j-1}}{l_j}\alpha_{j-1}
         -\frac{l_{j+1}}{l_j}\alpha_{j+1}-\frac{l_n}{l_j}\alpha_n.
\]
Hence,
\[
a_{jj}=-\frac{l_1}{l_j}a_{1j}-\cdots-\frac{l_{j-1}}{l_j}a_{j-1,j}
         -\frac{l_{j+1}}{l_j}a_{j+1,j}-\frac{l_n}{l_j}a_{nj}.
\]
Thus,
\begin{align*}
|a_{jj}|&\leq\left|\frac{l_1}{l_j}\right||a_{1j}|+\cdots+\left|\frac{l_{j-1}}{l_j}\right||a_{j-1,j}|
         +\left|\frac{l_{j+1}}{l_j}\right||a_{j+1,j}|+\left|\frac{l_n}{l_j}\right||a_{nj}|\\
        &\leq |a_{1j}|+\cdots+|a_{j-1,j}|+|a_{j+1,j}|+|a_{nj}|,
\end{align*}
which contradicts to the assumption. Therefore, $\det(A)\neq 0$.
\end{proof}

\begin{proof}[Proof of Theorem \ref{T:close}]
For every $i=1,2,\ldots,n$, let $\nu_i$ be the coefficient 
of $x_i^{a_i}$ in $f_i$ and $c_i$ the sum of the absolute value 
of the other coefficients in $f_i$. Since $\mbox{d}(f_i,x_i^{a_i})<1$,
it follows that $c_i+|\nu_i-1|<1$, which implies that 
$c_i<1-|\nu_i-1|\leq |1+(\nu_i-1)|=|\nu_i|$.

Let $A$ be the $p\times q$ matrix defined in the proof of 
Proposition \ref{P:generic}. Since $x_1^{a_1},x_2^{a_2},\ldots,x_n^{a_n}$ 
is a regular sequence in $S$, it follows that there is a 
$p\times p$ submatrix $B$ of $A$, which is invertible 
when evaluated at the $n$-tuple $(x_1^{a_1},x_2^{a_2},\ldots,x_n^{a_n})$.
Let $B(x_1^{a_1},x_2^{a_2},\ldots,x_n^{a_n})$ denote the 
matrix $B$ evaluated at $(x_1^{a_1},x_2^{a_2},\ldots,x_n^{a_n})$.
By Remark \ref{R:A}, it is easy to see that 
the column vectors of $B(x_1^{a_1},x_2^{a_2},\ldots,x_n^{a_n})$
are of the form $(0,\ldots,0,1,0,\ldots,0)^T$.
Since $B(x_1^{a_1},x_2^{a_2},\ldots,x_n^{a_n})$ is invertible,
by rearranging the column vectors of $B$, 
we can assume that $B(x_1^{a_1},x_2^{a_2},\ldots,x_n^{a_n})$
is the identity matrix. 

Let $B(f_1,f_2,\ldots,f_n)$ be the 
matrix $B$ evaluated at the $n$-tuple $(f_1 ,f_2 ,\ldots, f_n)$.
For every $i=1,2,\ldots,p$, let $\eta_i$ be the $i$-th column vector
of $B(f_1,f_2,\ldots,f_n)$. Then by Remark \ref{R:A}, it is easy to 
see that the $i$-th entry in $\eta_i$ is $\nu_j$ for some $1\leq j\leq n$,
and all the other coefficients of $f_j$ appear as the entries of $\eta_i$
exactly once, and the remaining entries of $\eta_i$ are zeros.
Since $c_j<|\nu_j|$, it follows that the matrix $B(f_1,f_2,\ldots,f_n)$
satisfies the assumptions of Lemma \ref{L:det}, and then
$\det(B(f_1,f_2,\ldots,f_n))\neq 0$. So the matrix $A$ evaluated at 
the $n$-tuple $(f_1,f_2,\ldots,f_n)$ has rank $p$. 
Therefore, $f_1,f_2,\ldots,f_n$ is a regular sequence in $S$.
\end{proof}

\begin{remark}\label{R:close}
It is easy to see that in $S=\mathbb{C}[x_1,x_2]$, if $\mbox{d}(f_1,x_1^{a_1})=1$ 
and $\mbox{d}(f_2,x_2^{a_2})=1$, then $f_1,f_2$ may not be a regular sequence in $S$.
For example, $f_1=x^2+xy$, $f_2=xy^2+y^3$ is not a regular sequence in $S$.
\end{remark}

The regular sequences obtained in Theorem \ref{T:close} are very different
from the regular sequences obtained in Theorem \ref{T:pst}. 
It would be interesting to know if the EGH Conjecture holds for these two 
classes of regular sequences. 
Besides, the above two classes of regular sequences 
are in $S=\mathbb{C}[x_1,x_2,\ldots,x_n]$. 
If $S=\mathbb{Z}/2\mathbb{Z}[x_1,x_2,\ldots,x_n]$,
then the sequence in Theorem \ref{T:pst} is not a 
regular sequence because the polynomial system has 
a non-zero solution $(1,1,0,\ldots,0)$. 
It would be interesting to find a new class of 
regular sequences in $S=\mathbb{Z}/2\mathbb{Z}[x_1,x_2,\ldots,x_n]$.

We end this paper with the following proposition. We put it 
in this paper because its proof is similar to the proof of Proposition \ref{P:generic}. 

\begin{proposition}\label{P:nbhd}
Let $S=\mathbb{C}[x_1,x_2,\ldots,x_n]$. Let $f_1,f_2,\ldots,f_t$ be 
$\mathbb{C}$-linearly independent polynomials in $S_i$ with $i\geq 1$.
Then there exists $\varepsilon>0$ such that for any homogeneous 
polynomials $g_1,g_2,\ldots,g_t$ in $S_i$ satisfying 
$\mbox{d}(g_1,f_1)<\varepsilon$,$\mbox{d}(g_2,f_2)<\varepsilon$,$\ldots$,$\mbox{d}(g_t,f_t)<\varepsilon$,
we have that $g_1,g_2,\ldots,g_t$ are $\mathbb{C}$-linearly independent and
\[
\dim_{\mathbb{C}}(f_1,f_2,\ldots,f_t)_{i+1}\leq \dim_{\mathbb{C}}(g_1,g_2,\ldots,g_t)_{i+1}.
\]
\end{proposition}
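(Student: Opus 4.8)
The plan is to reduce both assertions to the \emph{lower semicontinuity of matrix rank}: the nonvanishing of a fixed minor is an open condition, so a rank attained at $(f_1,\ldots,f_t)$ persists, and can only increase, under small perturbations. Throughout I would work in the monomial basis of each $S_j$, under which a homogeneous polynomial is identified with its coordinate vector; the distance of Definition \ref{D:distance} is then just the $\ell^1$-distance between coordinate vectors, so it induces the standard Euclidean topology on coefficient space. In particular, any polynomial function of the coefficients of $g_1,\ldots,g_t$ that is nonzero at $(f_1,\ldots,f_t)$ remains nonzero once every $\mbox{d}(g_j,f_j)$ is small enough.

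For linear independence, I would record $f_1,\ldots,f_t$ as the columns of an $r\times t$ matrix $C$, where $r=\dim_{\mathbb{C}}S_i$ and the entries of $C$ are the coefficients of the $f_j$. Since the $f_j$ are $\mathbb{C}$-linearly independent, $\mbox{rank}(C)=t$, so some $t\times t$ minor of $C$ is nonzero. That minor is a polynomial in the coefficients, hence stays nonzero for all $g_1,\ldots,g_t$ with each $\mbox{d}(g_j,f_j)$ below some $\varepsilon_1>0$; for such $g_j$ the corresponding matrix still has rank $t$, so $g_1,\ldots,g_t$ are linearly independent.

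For the dimension inequality, the key observation is that $(f_1,f_2,\ldots,f_t)_{i+1}$ is spanned by the products $x_kf_j$ for $1\leq k\leq n$ and $1\leq j\leq t$, because any degree-$(i+1)$ element of the ideal is $\sum_j p_jf_j$ with each $p_j\in S_1=\mbox{span}\{x_1,\ldots,x_n\}$. Let $M(f_1,\ldots,f_t)$ be the matrix whose columns are the coordinate vectors of these products in the monomial basis of $S_{i+1}$; exactly as for the matrix $A$ in the proof of Proposition \ref{P:generic} (see Remark \ref{R:A}), each entry of $M$ is either a coefficient of some $f_j$ or zero, so $M$ depends polynomially on the coefficients. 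By construction $\dim_{\mathbb{C}}(f_1,\ldots,f_t)_{i+1}=\mbox{rank}(M(f_1,\ldots,f_t))=:\rho$, so $M(f_1,\ldots,f_t)$ has a nonzero $\rho\times\rho$ minor. As before, that minor stays nonzero for all $g_j$ with $\mbox{d}(g_j,f_j)$ below some $\varepsilon_2>0$, whence $\mbox{rank}(M(g_1,\ldots,g_t))\geq\rho$ and therefore $\dim_{\mathbb{C}}(g_1,\ldots,g_t)_{i+1}\geq\dim_{\mathbb{C}}(f_1,\ldots,f_t)_{i+1}$.

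Taking $\varepsilon=\min\{\varepsilon_1,\varepsilon_2\}$ then gives both conclusions simultaneously. I do not expect a serious obstacle here, since everything rests on the single soft fact that the vanishing of a minor is a closed condition; the only points requiring care are the identification of the distance topology with the standard topology on coefficients, and the verification that the ideal in degree $i+1$ is precisely the span of the $x_kf_j$, so that its dimension really is the rank of a matrix with polynomial entries. The content of the statement is exactly that rank can jump \emph{up} but not \emph{down} under perturbation, which is what the minor argument delivers.
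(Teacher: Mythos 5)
Your proposal is correct and follows essentially the same route as the paper: the paper also forms the matrix of coordinate vectors of the products $x_lg_j$ in the monomial basis and observes that $\dim_{\mathbb{C}}(g_1,\ldots,g_t)_{i+1}\geq p$ holds exactly on the open set where some $p\times p$ minor is nonzero (the complement of $V(I_p(B))$), which contains $(f_1,\ldots,f_t)$. Your treatment of linear independence via a nonvanishing $t\times t$ minor simply makes explicit the step the paper dismisses as easy, so there is no substantive difference.
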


\begin{proof}
It is easy to see that  $g_1,g_2,\ldots,g_t$ are $\mathbb{C}$-linearly independent 
when $\varepsilon$ is sufficiently small. Let
\[
\Sigma=\{(g_1,g_2,\ldots,g_t)\in (S_i)^t|\dim_{\mathbb{C}}(g_1,g_2,\ldots,g_t)_{i+1}\geq \dim_{\mathbb{C}}(f_1,f_2,\ldots,f_t)_{i+1}\}.
\]
Since the $t$-tuple $(f_1,f_2,\ldots,f_t)$ is in $\Sigma$, it follows that $\Sigma$ is 
non-empty, so that it suffices to prove that $\sigma$ is an open
subset of the affine $\mathbb{C}$-space $(S_i)^t$. Note that 
under the monomial basis as defined in the proof of Proposition \ref{P:generic}, 
$(S_i)^t$ is isomorphic to $\mathbb{C}^N$ where $\textstyle{N=t\binom{n+i-1}{i}}$.

For any $t$-tuple $(g_1,g_2,\ldots,g_t)\in (S_i)^t$, $(g_1,g_2,\ldots,g_t)_{i+1}$
is the $\mathbb{C}$-vector space spanned by the $nt$ polynomials $x_1g_1,x_2g_1,\ldots,x_ng_t$.
Let $B$ be the $nt\times \textstyle{\binom{n+i}{i+1}}$ matrix with each row 
being the coordinates of some $x_lg_j$ under the monomial basis of $S_{i+1}$,
where $l=1,2,\ldots,n$ and $j=1,2,\ldots,t$. 
If the coordinates of $t$-tuple $(g_1,g_2,\ldots,g_t)$ under the monomial 
basis of $(S_i)^t$ is $(\lambda_1,\lambda_2,\ldots,\lambda_N)$, then 
the entries of $B$ are either some $\lambda_i$ or zero. 
Let $p=\dim_{\mathbb{C}}(f_1,f_2,\ldots,f_t)_{i+1}$, 
then it is easy to see that the $t$-tuple $(g_1,g_2,\ldots,g_t)$ is in $\Sigma$
if and only if the $N$-tuple $(\lambda_1,\lambda_2,\ldots,\lambda_N)$ is in 
$\mathbb{C}^N-V(I_p(B))$.
Since $\mathbb{C}^N-V(I_p(B))$ is an open subset of $\mathbb{C}^N$,
it follows that $\Sigma$ is an open subset of $(S_i)^t$.
\end{proof}

\end{document}